\newtheorem{thm}{Theorem}
\newtheorem{lem}{Lemma}
\newtheorem{prop}{Proposition}
\newtheorem{res}{Corollary}
\newtheorem{rem}{Remark}
\newtheorem{definition}{Definition}
\newtheorem{ex}{Example}
\newcommand{\ve}{\varepsilon}
\newcommand{\eps}{\varepsilon}
\renewcommand{\P}{\mathsf P}
\newcommand{\E}{\mathsf E}
\newcommand{\Ef}{\mathcal E}
\newcommand{\F}{\mathcal F}
\newcommand{\bx}{\mathbf{x}}
\newcommand{\R}{\mathbb R}
\renewcommand{\Re}{\mathbb R}
\newcommand{\df}{\mathrm d}
\newcommand{\prt}{\partial}
\newcommand{\mc}[1]{\mathcal {#1}}
\newcommand{\dif}{{\mathrm D}}
\newcommand{\dom}{{\rm dom}(\dif)}
\newcommand{\1}{1\!\!\hbox{{\rm I}}}
\newcommand{\be}{\begin{equation}}
\newcommand{\ee}{\end{equation}}
\def\institute#1{\gdef\@institute{#1}}
\begin{document}

\title[Malliavin calculus approach to statistical inference for
SDE's]{Malliavin calculus approach to statistical inference for
L\'{e}vy driven SDE's}

\author{D. O. Ivanenko \and  A. M. Kulik}

\institute{D. O. Ivanenko \at
              Kyiv National Taras Shevchenko  University, Volodymyrska, 64, Kyiv, 01033, Ukraine\\
              \email{ida@univ.kiev.ua}
              \and
           A. M. Kulik \at Institute of Mathematics, Ukrai\-ni\-an National Academy of Sciences, 01601 Tereshchenkivska, 3, Kyiv, Ukraine \\
           \email{kulik@imath.kiev.ua}
}

\date{Received: date / Accepted: date}

\maketitle

\begin{abstract}
By means of the Malliavin calculus, integral representations for
the likelihood function and for the derivative of the
log-likelihood function are given for a model based on discrete
time observations of the solution to equation $
    \df X_t=a_\theta(X_t)\df t +\df Z_t
$ with a  L\'evy process $Z$. Using these representations,
regularity of the  statistical experiment and  the Cramer-Rao
inequality are proved. \keywords{Malliavin calculus \and
Likelihood function \and L\'{e}vy driven SDE\and  Regular
statistical experiment \and Cramer-Rao inequality}
\end{abstract}

\section{Introduction}
Consider stochastic equation of the form
\begin{equation}\label{eq1}
    \df X_t=a_\theta(X_t)\df t +\df Z_t,
\end{equation}
where $Z$ is a one-dimensional L\'evy process,
$a:\Theta\times\R\to\R$ is a measurable function,
$\Theta\subset\R$ is a parametric set. The main objective of our
study is the statistical inference of the unknown parameter
$\theta$ given the observations of the  solution to this equation
at a discrete time set.

The likelihood function in the above model is highly implicit. In
this paper, we develop an approach which makes it possible to
control the properties of the likelihood and log-likelihood
functions only in the terms  of the objects involved in the model:
the function $a_\theta(x)$, its derivatives,  and the L\'evy
measure of the L\'evy process $Z$. This approach is based on an
appropriate version of the Malliavin calculus for  a Poisson point
measure.

The Malliavin calculus,  developed first as a tool for proving
existence and smoothness of distribution densities, appears to be
very  efficient in a study of sensitivities of expectations w.r.t.
parameters. This field of applications, motivated by the analysis
of volatilities in the models of financial mathematics, comes back
to \cite{finance} and  was studied intensively during the last
years. This technique has  natural extensions to statistical
problems. In \cite{Gobet1}, \cite{Gobet2}, for discretely observed
diffusion models,  a Malliavin-type  integral representation of
the derivative of the log-likelihood ratio w.r.t. parameter is
given, and then is used as a key tool in the proof of the LAN
(LAMN) property of the model. In the recent paper \cite{Corc},
several versions of Malliavin calculus-based  sensitivity analysis
on the Wiener-Poisson probability space was developed, with
applications to evaluation of the Cramer-Rao inequality and to a
study of asymptotic properties of  MLE for discretely observed
diffusion processes.

We are mainly concentrated on the study of equation \eqref{eq1},
where $Z$ is a L\'evy process without a diffusion component. We
develop a particular version  of the Malliavin calculus for
Poisson point measures from \cite{Bismut_jumps},
\cite{Bichteler_Grav_Jacod} (see also more recent papers
\cite{BC}, \cite{BD} and references therein), which is  convenient
for the purposes of the further sensitivity analysis. We give
integral representations for the likelihood function and for the
derivative of the log-likelihood function w.r.t. parameter. These
representations  are used then as the key ingredient in the proof
of the regularity of a statistical experiment generated by
discrete time set observations of the solution to (1), and
consequent evaluation of the Cramer-Rao inequality. These
representations also give a tool for the further asymptotical
analysis of the properties on the model when the size of the
sample tends to $\infty$; see forthcoming papers \cite{Ivanenko}
and \cite{IvanenkoKulik}, addressed to the LAN property of the
model and to the asymptotic efficiency of the MLE, respectively.

For simplicity reasons, here we restrict ourselves by the case of
both observations $X_t$ and parameter $\theta$ being
one-dimensional, and postpone the study of the multidimensional
case for a further research.

\section{Notation, assumptions, and  main results}

\subsection{Notation and assumptions}
Let  $Z$ be  a L\'evy process without a diffusion component; that is,
$$
Z_t=ct+\int_0^t\int_{|u|>1}u\nu(\df s, \df u)+
\int_0^t\int_{|u|\leq 1}u\tilde\nu(\df s, \df u),
$$
where $\nu$ is a Poisson point measure with the intensity measure
$\df s\ \mu(\df u)$, and  $\tilde \nu (\df s, \df u)=\nu(\df s,
\df u)-\df s\ \mu(\df u)$ is respective compensated Poisson
measure. In the sequel, we assume the L\'evy measure $\mu$ to
satisfy the following:

\textbf{H.} (i) for some $\kappa>0$,
$$
\int_{|u|\geq 1}u^{2+\kappa}\mu(du)<\infty;
$$

(ii) for some $u_0>0$, the restriction of $\mu$ on $[-u_0, u_0]$
has a positive density $\sigma\in C^2\left(\left[-u_0,0\right)\cup
\left(0, u_0\right]\right)$;

(iii) there exists $C_0$ such that
$$
|\sigma'(u)|\leq C_0|u|^{-1}\sigma(u),\quad |\sigma''(u)|\leq
C_0u^{-2}\sigma(u),\quad |u|\in (0, u_0];
$$

(iv)
$$
\left(\log {1\over \eps}\right)^{-1}\mu\Big(\{u:|u|\geq
\eps\}\Big)\to \infty,\quad \eps\to 0.
$$
One particularly important class of  L\'evy processes satisfying
\textbf{H}  consists of \emph{tempered $\alpha$-stable processes}
(see \cite{Ros1}), which arise naturally in models of turbulence
\cite{novikov}, economical models of stochastic volatility
\cite{carr}, etc.

Without loss of generality, $\Theta$ is assumed to be a finite
open interval on $\Re$. For a given $\theta\in \Theta$, assuming
that the drift term $a_\theta$ satisfies the standard local
Lipschitz and linear growth conditions,  Eq. (\ref{eq1}) uniquely
defines a Markov process $X$. We denote by $\P_x^\theta$ the
distribution of this process in $\mathbb{D}([0, \infty))$ with
$X_0 =x$,  and  by $\E_x^\theta$ the expectation w.r.t. this
distribution. Respective finite-dimensional distribution for given
time moments $t_1<\dots<t_n$ is denoted by
$\P_{x,\{t_k\}_{k=1}^n}^\theta$. On the other hand, solution $X$
to Eq. (\ref{eq1}) is a random function defined  on the same
probability space $(\Omega, \F, \P)$ with the process $Z$, which
depends additionally on the parameter $\theta$ and the initial
value $x=X(0)$. We do not indicate this dependence in the
notation, i.e. write $X_t$ instead of e.g. $X^\theta_{x,t}$, but
it will be important in the sequel that, under certain conditions,
$X_t$ is $L_2$-differentiable w.r.t. $\theta$ and is
$L_2$-continuous w.r.t $(t,x,\theta)$.

In the sequel we will show that, under appropriate conditions,
Markov process $X$ admits a transition probability density
$p^\theta_t(x,y)$ w.r.t. Lebesgue measure, which is continuous
w.r.t. $(t,x,y)\in (0,\infty)\times \Re\times \Re$. Then  (see
\cite{bridges}), for every $t>0, x,y\in \Re$ such that
\be\label{pnon} p^\theta_t(x,y)>0,\ee  there exists a weak limit
in $\mathbb{D}([0, t])$
$$
\P^{t, \theta}_{x,y}=\lim_{\eps\to 0}\P^\theta_x\Big(\cdot\Big||X_t-y|\leq \eps\Big),
$$
which can be interpreted naturally as a \emph{bridge} of the
process $X$ started at $x$ and conditioned to arrive to $y$ at
time $t$. We denote by $\E^{t, \theta}_{x,y}$ the expectation
w.r.t. $\P^{t, \theta}_{x,y}$.

In what follows, $C$ denotes a constant which is not specified
explicitly and may vary from place to place. By
$C^{k,m}(\Re\times\Theta), k,m\geq 0$ we denote the class of
functions $f:\Re\times\Theta\to \Re$ which has continuous
derivatives
$$
\prt^{i+j}_{x\dots x \theta \dots \theta}f :={\prt^i\over\prt
x^i}{\prt^j\over \prt \theta^j}f, \quad i\leq k, \quad j\leq m.
$$

\subsection{Main results: formulation}
Here we formulate two main theorems of this paper.  The first one
concerns the local properties of the transition probabilities of
the Markov process $X$. The functionals $\Xi_t,\ \Xi^{1}_{t}$,
involved in its formulation, will be introduced explicitly in the
proof below; see formulae (\ref{Xi}) and (\ref{Xi_1}).

\begin{thm}\label{mainthm1} I. Let $a\in C^{2,0}(\Re\times\Theta)$ with bounded derivatives
$\prt_x a_\theta, \prt^2_{x x}a_\theta$.

Then   the Markov process $X$ defined by (\ref{eq1}) has a
transition probability density $p_t^\theta$ w.r.t. the Lebesgue
measure, which has an integral representation \be\label{irp}
p_t^\theta(x,y)=\E_x^\theta\left[\Xi_t \1_{X_t>y}\right], \quad
t>0, \quad x,y\in \Re. \ee The function $p_t^\theta(x,y)$ is
continuous  w.r.t. $(t,x,y, \theta)\in (0, \infty)\times \Re\times
\Re\times \Theta$.

II. Let $a\in C^{3,1} (\Re\times\Theta)$ have bounded derivatives
$\prt_xa$, $\prt^2_{xx}a$, $\prt^2_{x\theta}a$,
$\prt^3_{xx\theta}a$, $\prt^4_{xxx\theta}a$ and \be\label{lin_gr}
|a_\theta(x)|+|\partial_{\theta} a_\theta(x)|\leq
    C(1+|x|), \quad \theta\in \Theta, \quad x\in \Re.
\ee

Then the transition probability density has a derivative
$\prt_\theta p_t^\theta(x,y),$ which is continuous  w.r.t.
$(t,x,y, \theta)\in (0, \infty)\times \Re\times \Re\times \Theta$.

III. Under the conditions of statement II, one has
\be\label{d_theta_rep}
\prt_\theta p_t^\theta(x,y)= g_t^\theta(x,y) p_t^\theta(x,y)
\ee
with
\be\label{g}
g_t^\theta(x,y)=\begin{cases}\E^{t,\theta}_{x,y}\Xi_t^1, &  p_t^\theta(x,y)>0,\\
0,&\hbox{otherwise}.\end{cases}
\ee
\end{thm}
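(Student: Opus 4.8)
The plan is to build the entire argument on a Malliavin-type calculus for the Poisson point measure $\nu$ on $(\Omega,\F,\P)$, in which differentiation is performed by perturbing the amplitudes of the small jumps. Assumptions \textbf{H}(ii)--(iii) on the density $\sigma$ near the origin are precisely what is needed for the associated derivative operator $\dif$ and its adjoint (a Skorokhod-type integral) to be well defined and to satisfy an integration-by-parts formula, while \textbf{H}(iv) guarantees that ``sufficiently many'' small jumps are available. First I would show that, under the hypotheses of statement~I, $X_t$ lies in the domain of $\dif$ (to the order required for a single integration by parts), differentiate \eqref{eq1} to obtain the linear variational equations for $\dif_{s,u}X_t$ --- whose solutions, because $\prt_xa_\theta$ is bounded, are controlled together with the variational flow $\Phi_{s,t}=\exp\big(\int_s^t\prt_xa_\theta(X_r)\,\df r\big)$ and its inverse in every $L_p$ --- and then, the technical heart, prove that the (scalar) Malliavin covariance of $X_t$, assembled from the $\dif_{s,u}X_t$ over the small-jump region, is a.s.\ strictly positive with inverse having moments of all orders, uniformly for $(t,x,\theta)$ in compact subsets of $(0,\infty)\times\Re\times\Theta$. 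The non-degeneracy comes from \textbf{H}(iv), and I expect this to be the main obstacle: it is where the weak regularity of $\mu$ has to be converted into quantitative control of all the Malliavin weights.

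Granting non-degeneracy, the Poisson integration-by-parts formula gives, for every $\phi\in C_b^1(\Re)$,
\[
\E_x^\theta\big[\phi'(X_t)\big]=\E_x^\theta\big[\phi(X_t)\,\Xi_t\big],
\]
where $\Xi_t$ is the explicit functional exhibited in the course of the proof, built from the $\dif X_t$, the inverse Malliavin covariance and a Skorokhod-type integral, so that $\Xi_t\in L_p(\P_x^\theta)$ for all $p$ and $\E_x^\theta\Xi_t=0$. Since $|\E_x^\theta[\phi'(X_t)]|\le\|\phi\|_\infty\,\E_x^\theta|\Xi_t|$, the law of $X_t$ has a density bounded by $\E_x^\theta|\Xi_t|$; choosing $\phi_n\uparrow\1_{[y,\infty)}$ with $\phi_n'$ an approximate identity at $y$ and passing to the limit ($|\phi_n|\le1$, $\Xi_t\in L_1$) gives $p_t^\theta(x,y)=\E_x^\theta[\1_{X_t>y}\,\Xi_t]$ first for a.e.\ $y$ and then for all $y$, because the right-hand side is continuous in $y$ (dominated convergence, $\P_x^\theta(X_t=y)=0$); this is \eqref{irp}. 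Joint continuity of $p_t^\theta(x,y)$ in $(t,x,y,\theta)$ then follows from the $L_2$-continuity of $X_t$ and of $\Xi_t$ in $(t,x,\theta)$ --- itself a consequence of the continuous dependence of \eqref{eq1} and of its variational equations on the parameters, together with the uniform covariance bounds above --- combined with dominated convergence.

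For statement~II, the stronger smoothness $a\in C^{3,1}$ with the listed bounded derivatives and the linear-growth bound \eqref{lin_gr} make $t\mapsto X_t$ $L_2$-differentiable in $\theta$, with $Y_t:=\prt_\theta X_t$ solving the linear equation obtained by differentiating \eqref{eq1} in $\theta$, and $Y_t$ itself Malliavin-differentiable to the order needed below, all weights lying in every $L_p$. One cannot differentiate \eqref{irp} directly because $\1_{X_t>y}$ is not smooth; instead I would write $p_t^\theta(x,y)=\E_x^\theta[\delta_y(X_t)]$ in the sense of the limiting procedure above, formally differentiate to $\prt_\theta p_t^\theta(x,y)=\E_x^\theta[\delta_y'(X_t)\,Y_t]$, and apply the integration-by-parts formula once more in the $X_t$-direction to trade $\delta_y'$ for $\delta_y$, arriving at
\[
\prt_\theta p_t^\theta(x,y)=\E_x^\theta\big[\delta_y(X_t)\,\Xi_t^1\big],
\]
where $\Xi_t^1$ is the explicit functional of the statement, built from $Y_t$, $\dif X_t$, the inverse Malliavin covariance and a Skorokhod-type integral, again in every $L_p$. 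Making the interchange of $\prt_\theta$ and $\E_x^\theta$ and this second integration by parts rigorous requires uniform integrability of the weights along the smoothing of $\delta_y$, which is exactly what the extra assumptions of~II buy; existence and joint continuity of $\prt_\theta p_t^\theta$ then follow as in statement~I from $L_2$-continuity of all ingredients.

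Finally, statement~III is a disintegration. The functionals $\Xi_t$ and $\Xi_t^1$ are functionals of the path $(X_s)_{s\le t}$ (the Poisson measure on $[0,t]$ being recoverable from the path given $x,\theta$), so on $\{p_t^\theta(x,y)>0\}$, where the bridge $\P^{t,\theta}_{x,y}$ exists, the identity $\E_x^\theta[F\,\delta_y(X_t)]=\E^{t,\theta}_{x,y}[F]\,p_t^\theta(x,y)$ --- valid for bounded continuous path functionals $F$ by the very definition of the bridge as a conditional limit, and extended to $F=\Xi_t^1$ by truncation together with the bounded, strictly positive density --- applied to the formula for $\prt_\theta p_t^\theta$ from statement~II yields $\prt_\theta p_t^\theta(x,y)=\E^{t,\theta}_{x,y}[\Xi_t^1]\,p_t^\theta(x,y)$, i.e.\ \eqref{d_theta_rep}--\eqref{g}. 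At a point with $p_t^\theta(x,y)=0$, non-negativity of $p$ together with its $\theta$-differentiability (statement~II) and openness of $\Theta$ force $\prt_\theta p_t^\theta(x,y)=0$, so both sides vanish and the convention $g_t^\theta(x,y)=0$ is consistent. The only remaining delicacy is the truncation step in the disintegration for the merely $L_p$ functional $\Xi_t^1$, handled by the same approximation that gives meaning to $\E_x^\theta[\delta_y(X_t)\,\Xi_t^1]$ in statement~II.
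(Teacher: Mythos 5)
Your overall strategy --- a jump-amplitude Malliavin calculus, non-degeneracy of the Malliavin weight via \textbf{H}(iv), one integration by parts for \eqref{irp}, and a disintegration over $X_t=y$ for \eqref{d_theta_rep}--\eqref{g} --- is the paper's strategy, and your treatments of statements I and III are essentially the ones given there (one cosmetic difference: the paper perturbs all small jumps along a single fixed vector field $\varrho$, so the ``covariance'' is the scalar $\dif X_t=\Ef_t\int_0^t\int\Ef_s^{-1}\varrho(u)\nu(\df s,\df u)\ge 0$, and the inverse-moment bound \eqref{moments} is a short explicit computation from \textbf{H}(iv) rather than the ``main obstacle'' you anticipate). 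Your remark that $p\ge 0$, $p(\theta_0)=0$ and differentiability force $\prt_\theta p(\theta_0)=0$ is a correct justification of the convention in \eqref{g}.

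The genuine gap is in statement II. You propose to regularize $\delta_y$, differentiate in $\theta$, and integrate by parts once to reach $\prt_\theta p_t^\theta(x,y)=\E_x^\theta[\delta_y(X_t)\,\Xi_t^1]$, deferring the justification to ``uniform integrability of the weights along the smoothing of $\delta_y$.'' But that right-hand side still contains a delta function: it is $g_t^\theta(x,y)p_t^\theta(x,y)$, a conditional expectation times the density, and nothing in your argument shows that this quantity exists as a pointwise $\theta$-derivative of $p$, nor that it is continuous in $(t,x,y,\theta)$ --- which is the actual assertion of II. The convergence $\prt_\theta\E f_n(X_t)=\E f_n(X_t)\Xi_t^1$ with $f_n\to\delta_y$ is not uniform in $y$ in any usable sense, so ``as in statement I'' does not apply. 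The missing idea is to move in the opposite direction: integrate \eqref{irp} by parts a \emph{second} time to get $p_t^\theta(x,y)=\E_x^\theta\,\psi_y(X_t)\,\delta\bigl(\Xi_t/\dif X_t\bigr)$ with the Lipschitz function $\psi_y=(\cdot-y)\vee 0$ (formulae \eqref{delta_Xi}, \eqref{irp_1}); this requires exactly the third-order stochastic derivatives and the $\theta$-derivatives of $\dif X_t,\dif^2X_t,\dif^3X_t$ that the hypotheses of II provide. Once the test function is Lipschitz, $\theta$-differentiation under the expectation is the ordinary chain rule, yielding $\prt_\theta p_t^\theta(x,y)=\E_x^\theta[\varphi_y(X_t)\prt_\theta X_t\,\delta(\Xi_t/\dif X_t)+\psi_y(X_t)\prt_\theta\delta(\Xi_t/\dif X_t)]$, whose joint continuity is immediate from $L_2$-continuity of the ingredients and $\P_x^\theta(X_t=y)=0$. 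Without this (or an equivalent) extra smoothing step, your plan for II establishes at best a candidate formula for the derivative, not its existence or continuity; statement III in the paper then follows from the already-proved II via the elementary identity \eqref{rightside}, rather than serving, as in your sketch, as the vehicle for proving II.
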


\begin{rem} By statements II and III,  the logarithm of  the transition
probability density has a continuous derivative w.r.t. $\theta$ on
the open subset of $(0,\infty)\times \Re\times \Re\times \Theta$
defined by inequality (\ref{pnon}) and, on this subset, admits the
integral representation \be\label{log} \prt_\theta \log
p_t^\theta(x,y)=\E^{t,\theta}_{x,y}\Xi_t^1. \ee
\end{rem}

The second theorem concerns the basic properties of the
statistical experiment \be\label{stat_exp} \Big(\Re^n,
\mathcal{B}(\Re^n), \P_{x,\{t_k\}_{k=1}^n}^\theta, \theta\in
\Theta\Big), \ee generated by observations of the Markov process
$X$ with fixed $X_0=x$ at time moments $t_1<\dots<t_n$; we refer
to \cite{IKh} for the notation and terminology.  Recall that a
statistical experiment $(\mathcal{X}, \mathcal{U}, P^\theta,
\theta\in \Theta)$ is called \emph{regular}, if $\df
P^\theta=p^\theta \df \lambda$ for some $\sigma$-finite measure
$\lambda$, and
\begin{itemize}
\item[(a)] the function $\theta\mapsto p^\theta(\bx)$ is
continuous  for $\lambda$-a.a. $\bx\in \mathcal{X}$;

\item[(b)] the function  $\theta\mapsto \sqrt{p^\theta}\in
L_2(\mathcal{X}, \lambda)$ is differentiable;

\item[(c)] the function $\theta\mapsto \prt_\theta
\sqrt{p^\theta}\in L_2(\mathcal{X}, \lambda)$ is continuous (the
derivative is understood in the $L_2(\mathcal{X}, \lambda)$
sense).
\end{itemize}
For a regular statistical experiment with $\Theta\subset \Re^1$,
respective \emph{Fisher information} is defined as
$$
I(\theta)=4\int_{\mathcal{X}}
\left(\prt_\theta\sqrt{p^\theta}\right)^2\df\lambda.
$$

\begin{thm}\label{mainthm2}  Let conditions of statement II of Theorem \ref{mainthm1}
hold true and $x\in \Re, n\in \mathbb{N}$,  $0<t_1<\dots<t_n$ be fixed.

Then the statistical experiment (\ref{stat_exp}) is regular.
Respective Fisher  information equals
$$
I(\theta)=\sum_{k=1}^n\E_x^\theta
\Big(g_{t_k-t_{k-1}}^\theta(X_{t_{k-1}}, X_{t_k})\Big)^2,
$$
where $t_0:=0$.

\end{thm}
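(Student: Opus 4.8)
The plan is to derive everything from Theorem \ref{mainthm1} together with the Markov structure. First, by the Markov property the finite-dimensional law $\P_{x,\{t_k\}_{k=1}^n}^\theta$ has Lebesgue density on $\Re^n$ equal to the product
\[
p^\theta(\mathbf y)=\prod_{k=1}^n p^\theta_{t_k-t_{k-1}}(y_{k-1},y_k),\qquad y_0:=x,
\]
so $\lambda$ is Lebesgue measure and condition (a) of regularity is immediate from the joint continuity of $p^\theta_t(x,y)$ in $(t,x,y,\theta)$ (statement I of Theorem \ref{mainthm1}). For (b) and (c) I would invoke the classical sufficient condition for regularity from \cite{IKh}: if for $\lambda$-a.a.\ $\mathbf y$ the map $\theta\mapsto p^\theta(\mathbf y)$ is continuously differentiable and the Fisher information $I(\theta)=\int_{\Re^n}(\prt_\theta p^\theta)^2/p^\theta\,\df\lambda$ is finite and continuous in $\theta$, then $\theta\mapsto\sqrt{p^\theta}\in L_2(\Re^n,\lambda)$ is differentiable with $L_2$-continuous derivative $\tfrac12 g^\theta\sqrt{p^\theta}$, where $g^\theta:=\prt_\theta\log p^\theta$. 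Pointwise continuous differentiability of the product $p^\theta(\mathbf y)$ follows from statement II of Theorem \ref{mainthm1} by the product rule, and statement III (formula (\ref{d_theta_rep})) identifies $g^\theta(\mathbf y)=\sum_{k=1}^n g^\theta_{t_k-t_{k-1}}(y_{k-1},y_k)$ on the set where all factors are positive. Hence the whole statement reduces to two analytic facts about $I(\theta)$: finiteness and continuity.

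For finiteness I would combine the bridge disintegration $\E^\theta_x[F(X_t)]=\int_\Re \E^{t,\theta}_{x,y}[F]\,p^\theta_t(x,y)\,\df y$ with (\ref{g}) and Jensen's inequality: for a single transition step,
\[
\int_\Re\big(g^\theta_t(x,y)\big)^2 p^\theta_t(x,y)\,\df y=\int_\Re\big(\E^{t,\theta}_{x,y}\Xi^1_t\big)^2 p^\theta_t(x,y)\,\df y\le \E^\theta_x\big[(\Xi^1_t)^2\big],
\]
and, applying the conditional form of this estimate factor by factor in $I(\theta)=\E^\theta_x\big[(g^\theta(X_{t_1},\dots,X_{t_n}))^2\big]$, finiteness follows from the $L_2$-moment estimates for the Malliavin weight $\Xi^1_t$ obtained in the course of proving Theorem \ref{mainthm1}. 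Continuity of $\theta\mapsto I(\theta)$ then follows by dominated convergence from the joint continuity of $g^\theta_t(x,y)$ and $p^\theta_t(x,y)$, provided those moment bounds are locally bounded in $\theta$ (and in $t$). I expect this last point — propagating the Malliavin moment estimates for $\Xi^1_t$ through the bridge expectations and through the product over the $n$ time steps, with domination locally uniform in $\theta$ — to be the main technical obstacle; everything else is bookkeeping on top of Theorem \ref{mainthm1}.

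It remains to compute $I(\theta)$. Expanding the square,
\[
I(\theta)=\sum_{k,l=1}^n\E^\theta_x\big[g^\theta_{t_k-t_{k-1}}(X_{t_{k-1}},X_{t_k})\,g^\theta_{t_l-t_{l-1}}(X_{t_{l-1}},X_{t_l})\big],
\]
I would show the off-diagonal terms vanish. For $k<l$, condition on $\sigma(X_{t_1},\dots,X_{t_{l-1}})$; by the Markov property and (\ref{d_theta_rep}) — noting that $\prt_\theta p^\theta_t=0$ wherever $p^\theta_t=0$, since there $\theta$ is a minimum of the differentiable nonnegative function $\theta\mapsto p^\theta_t(x,y)$ — one gets
\[
\E^\theta_x\big[g^\theta_{t_l-t_{l-1}}(X_{t_{l-1}},X_{t_l})\,\big|\,X_{t_{l-1}}\big]=\int_\Re\prt_\theta p^\theta_{t_l-t_{l-1}}(X_{t_{l-1}},y)\,\df y=\prt_\theta\!\int_\Re p^\theta_{t_l-t_{l-1}}(X_{t_{l-1}},y)\,\df y=0,
\]
where the interchange of $\prt_\theta$ and $\int\df y$ is justified by the domination $\int_\Re|\prt_\theta p^\theta_t(x,y)|\,\df y\le\E^\theta_x|\Xi^1_t|$, locally bounded in $\theta$. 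Since $g^\theta_{t_k-t_{k-1}}(X_{t_{k-1}},X_{t_k})$ is $\sigma(X_{t_1},\dots,X_{t_{l-1}})$-measurable for $k<l$, every cross term is zero, leaving $I(\theta)=\sum_{k=1}^n\E^\theta_x\big[(g^\theta_{t_k-t_{k-1}}(X_{t_{k-1}},X_{t_k}))^2\big]$, as claimed.
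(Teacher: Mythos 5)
Your route differs from the paper's at the decisive step. You reduce regularity to the classical Hajek/Ibragimov--Khasminskii sufficient condition (pointwise $C^1$ dependence of $p^\theta(\bx)$ on $\theta$ plus finiteness \emph{and continuity} of $I(\theta)$), whereas the paper never establishes continuity of $I(\theta)$ as an intermediate step: it approximates $\sqrt{z}$ by explicit $C^1$ functions $\psi_\ve$ vanishing for $z\le\ve/2$ and proves that $\psi_\ve(p^\theta)\to\sqrt{p^\theta}$ and $\psi_\ve'(p^\theta)\prt_\theta p^\theta\to \tfrac12 g^\theta\sqrt{p^\theta}$ in $L_2(\lambda)$ \emph{uniformly in} $\theta$, which gives (b) and (c) directly. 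Your handling of (a), of pointwise differentiability, of the finiteness of $I(\theta)$ via Jensen and the moment bounds for $\Xi^1_t$, and of the vanishing cross terms is sound and close to the paper's (the paper gets $\E_x^\theta g_t^\theta(x,X_t)=0$ at once from (\ref{rightside}) with $f\equiv1$, sidestepping your interchange of $\prt_\theta$ and $\int\df y$).

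The genuine gap is the continuity of $I(\theta)$, which you defer to ``dominated convergence from joint continuity, provided the moment bounds are locally bounded in $\theta$.'' This mechanism fails here for two reasons. First, the integrand $(g^\theta)^2p^\theta=(\prt_\theta p^\theta)^2/p^\theta\,\1_{p^\theta>0}$ need not converge pointwise as $\theta\to\theta_0$ at points where $p^{\theta_0}(\bx)=0$: there indeed $\prt_\theta p^{\theta_0}(\bx)=0$, but the ratio for nearby $\theta$ is indeterminate (if $p^\theta(\bx)\sim(\theta-\theta_0)^2h(\bx)$ the integrand tends to $4h(\bx)$, not $0$). In this model that set is not harmless: the support of $p_t^\theta(x,\cdot)$ genuinely moves with $\theta$ (Example \ref{ex1}), which is precisely the feature that makes assumption (\ref{i}) of \cite{Corc} fail and that the paper is built to handle. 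Second, ``locally bounded moments'' is not a dominating function; what is needed is uniform integrability of the family $(g^\theta)^2p^\theta$. The estimate that actually does the work in the paper --- and that your route would equally require --- is the uniform-in-$\theta$ bound
$$
\int_{p^\theta\le\ve}(g^\theta)^2p^\theta\,\df\lambda\ \le\ \Big(\int|g^\theta|^pp^\theta\,\df\lambda\Big)^{2/p}\Big(\int_{p^\theta\le\ve}p^\theta\,\df\lambda\Big)^{(p-2)/p}\ \le\ C\,\ve^{(p-2)/(2p)}
$$
for some $p\in(2,2+\kappa)$, which rests on two nontrivial ingredients you do not supply: the $p$-th moment bound $\int|g^\theta|^pp^\theta\,\df\lambda\le\tilde C$ with $p$ \emph{strictly} greater than $2$ (coming from the $(2+\kappa)$-moment bounds on $\Xi^1_t$, Lemma \ref{lem_aux}), and the tail bound (\ref{p_tails}) guaranteeing $\int\sqrt{p^\theta}\,\df\lambda\le\hat C$ uniformly in $\theta$. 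Until this (or an equivalent uniform control near the zero set of $p^\theta$) is proved, the continuity of $I(\theta)$ --- and hence the applicability of the \cite{IKh} criterion --- remains open, and it is exactly the hard part of the theorem.
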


\begin{rem} For the statistical experiment (\ref{stat_exp}) one has $\mathcal{X}=\Re^n$,
and the natural choice of  $\lambda$ is the Lebesgue measure. Then
by Theorem \ref{mainthm1} \be\label{rep_p}
p^\theta(\bx)=\prod_{k=1}^np_{t_k-t_{k-1}}^\theta(x_{k-1}, x_k),
\quad \bx=(x_1, \dots, x_n), \ee where  $x_0:=x$, and there exists
a point-wise derivative \be\label{rep_dp} \prt_\theta
p^\theta(\bx)=p^\theta(\bx) g^\theta(\bx),\quad
g^\theta(\bx):=\sum_{k=1}^ng_{t_k-t_{k-1}}^\theta(x_{k-1}, x_k),
\quad \bx=(x_1, \dots, x_n). \ee In particular, one can interpret
(\ref{irp}) and (\ref{g})  as integral representations for the
log-likelihood function and the derivative of the likelihood
function in a one-point observation model.
\end{rem}

Combining Theorem \ref{mainthm2} and Theorem I.7.3 in \cite{IKh},
we obtain the following version of the Cramer-Rao inequality.

\begin{res}  Let conditions of statement II of Theorem \ref{mainthm1}
hold true and $x\in \Re, n\in \mathbb{N}$,  $0<t_1<\dots<t_n$ be fixed.  Assume that
$$
I(\theta)>0
$$
and $T:\Re^n\to \Re$ is a Borel measurable function such that the function
$$
\theta\mapsto \E_x^\theta T^2(X_{t_1}, \dots, X_{t_n})
$$
is locally bounded.

Then the bias
$$
d(\theta)=\E_x^\theta T(X_{t_1}, \dots, X_{t_n})-\theta
$$
is differentiable, and
$$
\E_x^\theta \Big(T(X_{t_1}, \dots, X_{t_n})-\theta\Big)^2\geq
{(1+\prt_\theta d(\theta))^2\over I(\theta)}+d^2(\theta).
$$

\end{res}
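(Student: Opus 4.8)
The plan is to reduce the statement to the abstract Cram\'er--Rao (information) inequality, Theorem~I.7.3 in \cite{IKh}, by verifying its hypotheses in the present setting. By Theorem~\ref{mainthm2} the statistical experiment \eqref{stat_exp} is regular with $\lambda$ the Lebesgue measure on $\Re^n$, with density $p^\theta(\bx)$ given by \eqref{rep_p} and Fisher information $I(\theta)$ as stated. Combining \eqref{rep_dp} with the definition of $I(\theta)$ and the pointwise identity $(\prt_\theta\sqrt{p^\theta})^2=(g^\theta)^2p^\theta/4$, valid $\lambda$-a.e. on $\{p^\theta>0\}$, yields the representation $I(\theta)=\E_x^\theta\big(g^\theta(\bx)\big)^2$ that I shall use below. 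Together with the hypotheses $I(\theta)>0$ and the local boundedness of $\theta\mapsto\E_x^\theta T^2(X_{t_1},\dots,X_{t_n})$, these are precisely the ingredients required by \cite[Theorem~I.7.3]{IKh}; for completeness I would also reproduce, in this notation, the short argument behind that theorem.

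First I would show that $\theta\mapsto\E_x^\theta T$ is differentiable and compute its derivative. Writing $Tp^\theta=(T\sqrt{p^\theta})\cdot\sqrt{p^\theta}$ and using that $\theta\mapsto\sqrt{p^\theta}\in L_2(\Re^n,\lambda)$ is continuously differentiable (regularity, items (b)--(c)) while $\|T\sqrt{p^\theta}\|_{L_2(\lambda)}^2=\E_x^\theta T^2$ is locally bounded, one may differentiate under the integral sign to get
\[
\prt_\theta\E_x^\theta T=\int_{\Re^n}T(\bx)\,\prt_\theta p^\theta(\bx)\,\df\lambda(\bx)=\int_{\Re^n}T(\bx)\,g^\theta(\bx)\,p^\theta(\bx)\,\df\lambda(\bx)=\E_x^\theta\big[T\,g^\theta\big],
\]
where the middle equality is \eqref{rep_dp}. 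Applying the same to $T\equiv1$ (equivalently, differentiating $\int p^\theta\,\df\lambda\equiv1$) gives $\E_x^\theta g^\theta=0$. Hence $d(\theta)=\E_x^\theta T-\theta$ is differentiable and, subtracting the constant $\E_x^\theta T$ inside the expectation,
\[
1+\prt_\theta d(\theta)=\prt_\theta\E_x^\theta T=\E_x^\theta\big[(T-\E_x^\theta T)\,g^\theta\big].
\]

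It then remains to invoke the Cauchy--Schwarz inequality. Since $\E_x^\theta(g^\theta)^2=I(\theta)<\infty$ and $\E_x^\theta T^2<\infty$,
\[
\big(1+\prt_\theta d(\theta)\big)^2\le\E_x^\theta\big[(T-\E_x^\theta T)^2\big]\cdot I(\theta)=\mathrm{Var}_x^\theta(T)\cdot I(\theta),
\]
so that $\mathrm{Var}_x^\theta(T)\ge(1+\prt_\theta d(\theta))^2/I(\theta)$ by the assumption $I(\theta)>0$; adding the identity $\E_x^\theta(T-\theta)^2=\mathrm{Var}_x^\theta(T)+(\E_x^\theta T-\theta)^2=\mathrm{Var}_x^\theta(T)+d^2(\theta)$ gives the asserted bound. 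The only genuinely delicate step is the interchange of $\prt_\theta$ and $\int$ — equivalently, the differentiability of $d(\theta)$ — which is exactly where the local $L_2$-boundedness of $T$ is used in combination with the $L_2$-differentiability of $\sqrt{p^\theta}$ supplied by Theorem~\ref{mainthm2}; in the paper I would simply cite \cite[Theorem~I.7.3]{IKh} once its hypotheses have been checked.
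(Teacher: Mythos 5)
Your proposal is correct and follows essentially the same route as the paper, which proves the corollary simply by combining Theorem \ref{mainthm2} (regularity of the experiment) with Theorem I.7.3 of \cite{IKh}. The extra material you supply --- differentiating $\E_x^\theta T$ via the $L_2$-differentiability of $\sqrt{p^\theta}$ and the local boundedness of $\E_x^\theta T^2$, the identity $\E_x^\theta g^\theta=0$, and the Cauchy--Schwarz step --- is just the standard proof of that cited theorem, unpacked in the paper's notation, and is sound.
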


\subsection{Main results: discussion} Let us emphasize one particularly important
property of our model, which makes a substantial difference with
those studied in \cite{Gobet1}, \cite{Gobet2}, \cite{Corc}. In the
diffusive models studied in \cite{Gobet1}, \cite{Gobet2}, the
likelihood function is positive, hence the  log-likelihood
function is a $C^1$ function w.r.t. $\theta$. The approach of
\cite{Corc} requires, among others,  the following structural
assumption: \be\label{i} \hbox{the support of the density
$p^\theta$ does not depend on $\theta$}, \ee which also  makes it
possible to obtain the $C^1$ log-likelihood function by
considering  respective support set  as a state space
$\mathcal{X}$. However, as one can  see from the  example below,
in the  context of  Eq. (\ref{eq1}), the assumption (\ref{i})
would restrict the model  substantially.

\begin{ex}\label{ex1} Let  $Z$ be a tempered $\alpha$-stable with $\alpha\in (0,1)$,
which has positive jumps, only; that is,
$$
\mu(\df u)=r(u)u^{-\alpha-1}\1_{u>0}\df u
$$
with some (smooth enough) $r$ such that $r(u)=\mathrm{const}>0$
in a neighborhood of $u=0$, and $r(u)\to 0$ (rapidly enough) as
$u\to \infty$. Then by the support theorem from
\cite{t_simon_support}, the topological support of
$P_t^\theta(x,\df y)$ equals $[y_t^\theta(x), \infty)$, where
$y_t^\theta(x)$ is the value at time moment $s=t$ of the solution
to the Cauchy problem
$$
y'(s)=a_\theta(y(s)), \quad y(0)=x.
$$
Generically, $y_t^\theta(x)$ depends on $\theta$; for instance,
for $a_\theta(x)=\theta x$ one has $y_t^\theta(x)=xe^{t\theta}$.
Because the topological support of $P_t^\theta(x,\df y)$ is the
closure of the support of the transition probability density
$p^\theta_t(x,y)$, this indicates that in this case  (\ref{i})
fails.
\end{ex}

This observation mainly motivates the particular form of our
approach: because we would like to exclude the assumption
(\ref{i}) completely,  we do not  rely on the path-wise regularity
of the log-likelihood function. Instead of that, we prove  that
our model is regular. Regularity of the experiment yields the
Cramer-Rao inequality, and, which is maybe even more important, is
a natural pre-requisite for the Ibragimov-Khasminskii's version of
the Hayek-Le Cam approach to the study of the asymptotic properties of a model (\cite{IKh}, Chapter II.3 and Chapter
III.1). In our forthcoming papers \cite{Ivanenko} and
\cite{IvanenkoKulik}, we use both the integral representations
(\ref{irp}), (\ref{g})  and the regularity of the model to prove
the LAN property of the model and  the asymptotic efficiency of
the MLE.

\section{Malliavin calculus  for Poisson  random measures}\label{seq3}

Typically, a Malliavin calculus-based sensitivity analysis
requires a pair of a derivation operator $\dif$  and an adjoint
operator $\delta=\dif^*$ to be defined on the probability space
under the consideration. Below we outline such a construction,
based on perturbations of ``jump amplitudes'',  which  is well
known in the field, goes back to  \cite{Bismut_jumps},
\cite{Bichteler_Grav_Jacod}, and has various modifications; see
e.g. \cite{BC}, \cite{BD}, and references therein. To keep the
exposition self-sufficient and transparent, we  explain the main
components of this construction; in addition, we specially modify
it in order to provide the integral representations,  involved
into the  further statistical applications, in as explicit form as
it is possible.
\subsection{Perturbations of Poisson  random measures and associated differential operators}

Let $\varrho:\Re\to \Re^+$ be a $C^2$-function with bounded
derivative.  Denote by $Q_c(x),\ c\in \Re$  the value at the time
moment $s=c$ of the solution to  Cauchy problem
$$
q'(s)=\varrho(q(s)), \quad q(0)=x.
$$
Then $\{Q_c, c\in \Re\}$ is a group of transformations of $\Re$,
and $\prt_c Q_c(x)|_{c=0}=\varrho(x)$.

Denote by $\mc{O}$ the space of \emph{locally finite
configurations}  in $\Re^+\times(\Re\setminus\{0\})$; that is, the
family of all sets $\varpi\subset \Re^+\times(\Re\setminus\{0\})$
such that for any $\eps>0, R>0, T>0$ the set
$$
\varpi\cap \Big([0, T]\times \{u:\eps<|u|<R\}\Big)
$$
is finite. This space is naturally endowed by the \emph{vague
topology};  that is, the minimal topology w.r.t. which any  map of
the form
$$
\varpi\mapsto \sum_{(\tau,u)\in \varpi}f(\tau, u)
$$
with a continuous $f: \Re^+\times(\Re\setminus\{0\})$, which  is
supported by some set of the form $[0, T]\times \{u:\eps<|u|<R\}$,
is continuous.   This is the natural state space when the random
point measure $\nu$ is considered as a random element; denote by
$\P_\nu$  the distribution of $\nu$ in $(\mc{O}, \mc{B}(\mc{O}))$.
In what follows, we  identify the initial probability space
$(\Omega, \F, \P)$ with $(\mc{O}, \mc{B}(\mc{O}), \P_\nu)$, and
assume $\nu(\omega)=\omega$. Under this convention, which does not
restrict generality, every $\omega\in \mc{O}$ is a locally finite
collection of points $(\tau, u)$, where $\tau\in \Re^+$ is the
``jump time'', and $u\in \Re\setminus\{0\}$ is the ``jump
amplitude''.

 For  given $T>0$ and $\varrho$, define the group $\{\mc{Q}_c, c\in \Re\}$
 of transformations of the configuration space $\mc{O}$ in the following way.
 Transformation  $\mc{Q}_c$ maps a  configuration  $\omega$ into the
 collection of points of the form
$$
\begin{cases} (\tau, Q_c (u)), & (\tau, u)\in \omega\hbox{ is such that $\tau\leq T$};\\
 (\tau, u), & (\tau, u)\in \omega\hbox{ is such that $\tau>T$}.
 \end{cases}
$$
Then $\mc{Q}_c$ transforms  $\nu$ into the Poisson point measure $\nu_c$,
$$\begin{aligned}\nu_{c}(A\times B)=\nu\left(\left(A\cap[0,T]\right)\times
Q_c^{-1}(B)\right)+&\nu\left(\left(A\cap\overline{[0,T]}\right)\times
B\right),\\&\quad \quad   A\in\mc{B}(\R_+), B\in\mc{B}(\R),\end{aligned}$$  and
the intensity measure for $\nu_c$
has the form \be\label{muc} \1_{s\leq T}\df s\, [\mu\circ
Q_c^{-1}](\df u)+\1_{s>T}\df s\, \mu(\df u).\ee

Fix $u_1\in (0, u_0)$, where $u_0$  comes from \textbf{H} (ii). In
what  follows, we choose  the function  $\varrho$ involved in the
definition of $Q_c, c\in \Re$ in such a way that
$$
\varrho(u)=\begin{cases} u^{2},&|u|\leq u_1;\\
0,&|u|\geq u_0\end{cases}.
$$
Then  the intensity measure (\ref{muc}) has the density w.r.t.
$\df s\ \mu(\df u)$ equal to \be\label{pct} m_{c,T}(s,u)=\1_{s\leq
T} m_{c}(u)+\1_{s>T}, \ee where
$$
m_{c}(u)={\df[\mu\circ Q_c]^{-1}\over \df\mu}(u)={1\over
R_c(Q_c^{-1}(u))} {\sigma(Q_c^{-1}(u))\over
\sigma(u)}
$$
with
$$
R_c(x):=\prt_xQ_c(x)=\exp\left(\int_0^c\varrho'(Q_s(x))\, \df
s\right).
$$
By the construction, $Q_c(u)\equiv u, c\in \Re$ if $|u|\geq u_0$.
On the other hand, for any given $c\in \Re$ there exists $u(c)>0$
s.t.
$$
Q_s(u)=\left({1\over u}-s\right)^{-1}={u\over 1-us}, \quad |s|\leq |c|, \quad |u|\leq u(c).
$$
Therefore there exists $\hat u(c)\in (0, u(c))$ s.t. \be\label{21}
{Q_s(u)\over u}\in \left[\frac{1}{2}, 2\right], \quad |s|\leq |c|,
\quad |u|\leq \hat u(c). \ee Because $Q^{-1}_c=Q_{-c}$, this
yields immediately that
$$
R_c(Q^{-1}_c(u))=O(|u|), \quad u\to 0.
$$
Using  (\ref{21}) and \textbf{H} (iii), we get for $|u|\leq \hat
u(c)$
$$
\left|\sigma\left(Q_c^{-1}(u)\right)-\sigma(u)\right|=
\left|\int_0^c\sigma'(Q_s^{-1}(u))Q_s^{-1}(u)^2\, \df s\right|\leq
2|u| \int_0^{|c|}\sigma\left(Q_s^{-1}(u)\right)\, \df s.
$$
It is straightforward to deduce from \textbf{H}(iii) that, for some $u_2>0$ and $K>1$,
$$
\sigma(\gamma u)\leq K\sigma(u), \quad \gamma\in \left[1/2,
2\right], \quad |u|\leq u_2.
$$
Summarizing all the above, we conclude that for a given $c$ the
function $\log m_{c} $ is continuous, vanishes when $|u|\geq u_0$,
and satisfies
$$
\log m_{c}(u)=O(|u|), \quad u\to 0.
$$
Therefore, one has $$
 \int\limits_{|\log m_{c,T}|\geq\log 2}\left|
 1-m_{c,T}(s,u)\right|\, \df s\mu(\df u)+
 \int\limits_{\R^+\times \Re}\frac{\log^2 m_{c,T}(s,u)}{1+\log^2 m_{c,T}(s,u)}\df s\mu(\df
 u)<\infty.
$$
Applying  Skorokhod's criterion for absolute continuity of the
laws of  Poisson point measures \cite{skorohod}, we arrive at
following.

\begin{prop} The distribution $\P_{\nu_c}$ of $\nu_c$ in $(\mc{O}, \mc{B}(\mc{O}))$
is absolutely continuous w.r.t. $\P_\nu$, and
\begin{multline}\label{pi_c}
 \kappa_c:=\frac{\df\P_{\nu_c}}{\df\P_\nu}(\nu)=
 \exp\left\{ \int_0^T\int_{\R}\log m_{c}(u)\tilde \nu(\df s, \df u)\right.\\
 \left.+T\int_{\R}\left(
 1-m_{c}(u)+\log m_{c}(u)\right)\mu(\df
 u)
\right\}.
\end{multline}
\end{prop}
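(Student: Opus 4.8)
The plan is to obtain the statement as a direct application of Skorokhod's criterion for absolute continuity of laws of Poisson point measures \cite{skorohod}, the entire analytic content having already been prepared above. Recall that the criterion states: if a Poisson point measure $\nu'$ has intensity measure absolutely continuous with respect to that of $\nu$ with density $p=p(s,u)$, and if
$$
\int_{|\log p|\geq\log 2}|1-p|\,\df s\,\mu(\df u)+\int_{\R^+\times\R}\frac{\log^2 p}{1+\log^2 p}\,\df s\,\mu(\df u)<\infty,
$$
then $\P_{\nu'}\ll\P_\nu$ and
$$
\frac{\df\P_{\nu'}}{\df\P_\nu}(\nu)=\exp\left\{\int_{\R^+\times\R}\log p(s,u)\,\tilde\nu(\df s,\df u)+\int_{\R^+\times\R}\big(1-p(s,u)+\log p(s,u)\big)\,\df s\,\mu(\df u)\right\},
$$
where the integral with respect to $\tilde\nu$ is the usual compensated Poisson integral (an $L_2$-limit).

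First I would note that, by the very construction of $\nu_c$, its intensity measure is given by (\ref{muc}), so its density with respect to $\df s\,\mu(\df u)$ is exactly the function $m_{c,T}$ from (\ref{pct}). The integrability hypothesis of the criterion for $p=m_{c,T}$ is precisely the inequality displayed just before the Proposition, which was deduced from the estimates $\log m_{c}(u)=O(|u|)$ as $u\to0$ and $\log m_{c}\equiv0$ for $|u|\geq u_0$ obtained above; the same two facts, together with $\int_{|u|\leq1}u^2\mu(\df u)<\infty$ and the cancellation $1-m_{c}+\log m_{c}=O(u^2)$ near the origin, also ensure that the compensated integral $\int_0^T\int_\R\log m_{c}(u)\,\tilde\nu(\df s,\df u)$ and the $\mu$-integral in the exponent are well defined and finite. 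Hence the criterion applies with $p=m_{c,T}$ and gives $\P_{\nu_c}\ll\P_\nu$ together with the exponential formula above.

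It then remains only to rewrite that formula in the form (\ref{pi_c}). By (\ref{pct}), on $\{s>T\}$ one has $m_{c,T}(s,u)=1$, so both $\log m_{c,T}$ and $1-m_{c,T}+\log m_{c,T}$ vanish there and contribute nothing to either integral; on $\{s\leq T\}$ one has $m_{c,T}(s,u)=m_{c}(u)$, which does not depend on $s$, so the $s$-integration over $[0,T]$ in the second term simply produces the factor $T$. This gives (\ref{pi_c}) verbatim. The only genuine work in the argument is thus the verification of the hypotheses of Skorokhod's criterion, which was carried out above through the behaviour of $Q_c$ near $0$ and the bounds on $\sigma,\sigma',\sigma''$ in \textbf{H}(ii)--(iii); the point to watch is precisely that the singular part of $\mu$ at $u=0$ should not destroy the finiteness of the exponent, which is why the cancellation $1-m_{c}+\log m_{c}=O(u^2)$ — rather than the naive $O(|u|)$ — is needed there.
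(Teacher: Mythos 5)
Your proof is correct and follows essentially the same route as the paper: verify the Skorokhod integrability condition for the density $m_{c,T}$ of the intensity measure (\ref{muc}) and then specialize the resulting exponential formula using the fact that $m_{c,T}\equiv 1$ for $s>T$ and $m_{c,T}=m_c$ for $s\leq T$. Your additional observation that the cancellation $1-m_c+\log m_c=O(u^2)$ (rather than merely $O(|u|)$) is what makes the $\mu$-integral in the exponent finite is a correct and worthwhile refinement of a point the paper leaves implicit.
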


Consequently, the map $\mc{Q}_c:\Omega\to\Omega$ generates the map
of   $L_0(\Omega, \F, \P)$ into itself in the following way (we
keep the same symbol $\mc{Q}_c$ for this map):
$$
\mc{Q}_cF(\omega)= F(\mc{Q}_c\omega), \quad F\in L_0(\Omega, \F, \P).
$$

Straightforward computation shows that for every $u\not=0$
$$
\prt_cm_{c}(u)|_{c=0}=-{(\sigma(u)\varrho(u))'\over \sigma(u)}=:\chi(u).
$$
In addition,
\be\label{22}
\int_{\Re}\left({m_{c}(u)-1\over c}-\chi(u)\right)^2\mu(\df u)\to
0, \quad c\to 0.
\ee
Because $m_{c}(u)=1, |u|\geq u_0$,  the latter relation and
(\ref{pi_c}) yield \be\label{kappa} {\kappa_c(u)-1\over c}\to
\int_0^T\int_{\Re}\chi(u)\tilde \nu(\df s,\df u) \hbox{ in  }
L_2(\Omega, \F, \P), \quad c\to 0. \ee The proofs  of (\ref{22}) and (\ref{kappa}) are
straightforward but cumbersome, and therefore are omitted.

\begin{definition}\label{def1}. A functional  $F\in L_2(\Omega, \F, \P)$
is called \emph{stochastically differentiable}, if there exists an
$L_2(\Omega, \F, \P)$-limit \begin{equation}\label{dif} \hat\dif
F=\lim_{c\to 0}{1\over c}\Big(\mc{Q}_cF-F\Big).
\end{equation}
The closure $\dif$ of the operator $\hat \dif$ defined by
(\ref{dif}) is called the \emph{stochastic derivative}.  The
adjoint operator $\delta=\dif^*$ is called the \emph{divergence
operator} or the \emph{extended stochastic integral}.
\end{definition}

\begin{rem} By (\ref{chain}) and (\ref{DZ})  below,  $\dom$ is dense in
$L_2(\Omega, \F, \P)$, hence $\delta$ is well defined. In
addition, by statement 3 of Proposition \ref{lem01} below
$\mathrm{dom}(\delta)$ is dense in $L_2(\Omega, \F, \P)$, hence
$\hat \dif$ is closable.
 The operator $\delta$ itself is closed as an adjoint one; e.g. Theorem VIII.1
 in \cite{gorod}.
\end{rem}

The following proposition collects the main properties of the operators $\dif,\delta$.

\begin{prop}\label{lem01} 1. Let $\varphi\in C^1(\Re^d, \Re)$ have bounded
derivatives and  $F_k\in\dom$, $k=\overline{1,d}$.

Then
$\varphi(F_1,\dots,F_d)\in\dom$ and
\be\label{chain}\dif\left[
\varphi(F_1,\dots,F_d)\right]=\sum\limits_{k=1}^d[\partial_{x_k}\varphi](F_1,\dots,F_d)\dif
F_k.\ee

2. The constant function $1$ belongs to $\mathrm{dom}(\delta)$ and
\be\label{delta_1} \delta(1)=\int_0^T\int_{\Re}\chi(u)\tilde
\nu(\df s,\df u). \ee

3. Let  $G\in\dom$ and
\begin{equation}\label{2}
    \E\left(\delta(1)G\right)^2<\infty.
\end{equation}

Then $G\in\mathrm{dom}(\delta)$ and $\delta(G)=\delta(1)G-\dif G. $
\end{prop}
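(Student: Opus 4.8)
The plan is to establish the three statements in order, since each relies on the previous one. For statement~1 (the chain rule), I would argue directly from the definition of $\hat\dif$ as an $L_2$-limit applied to the composition $\varphi(\mc{Q}_cF_1,\dots,\mc{Q}_cF_d)$. Writing the difference quotient
$$
\frac1c\Big(\varphi(\mc{Q}_cF_1,\dots,\mc{Q}_cF_d)-\varphi(F_1,\dots,F_d)\Big)
$$
and using the mean value form of the fundamental theorem of calculus along the segment joining $(F_1,\dots,F_d)$ to $(\mc{Q}_cF_1,\dots,\mc{Q}_cF_d)$, this becomes $\sum_k [\partial_{x_k}\varphi](\xi_c)\cdot c^{-1}(\mc{Q}_cF_k-F_k)$ for an intermediate point $\xi_c$. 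Since each $F_k\in\dom$, in particular $c^{-1}(\mc{Q}_cF_k-F_k)\to\dif F_k$ in $L_2$; boundedness of $\partial_{x_k}\varphi$ together with $\xi_c\to(F_1,\dots,F_d)$ in probability (which follows because $\mc{Q}_cF_k\to F_k$ in $L_2$) lets one pass to the limit, first for $F_k\in\dom(\hat\dif)$ and then, by a routine closure argument, for $F_k\in\dom$. One subtlety is that the limit identity is first obtained in probability / $L_1$ and must be upgraded to an $L_2$ statement; this is handled by noting the right-hand side already lies in $L_2$ and that the left-hand difference quotients are $L_2$-Cauchy.

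For statement~2, I would identify $\delta(1)$ via the defining adjointness relation $\E[F\,\delta(1)] = \E[\dif F]$ for all $F\in\dom$. The natural route is to exploit the density $\kappa_c = \df\P_{\nu_c}/\df\P_\nu$ from Proposition on~\eqref{pi_c}: for $F\in\dom$ one has $\E[\mc{Q}_cF] = \E[F\kappa_c]$ by the change-of-variables/absolute-continuity identity, hence
$$
\E\!\left[\frac{\mc{Q}_cF-F}{c}\right] = \E\!\left[F\,\frac{\kappa_c-1}{c}\right].
$$
Letting $c\to0$, the left side tends to $\E[\dif F]$ and, by~\eqref{kappa}, the right side tends to $\E\big[F\int_0^T\!\int_\Re \chi(u)\tilde\nu(\df s,\df u)\big]$. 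This shows $1\in\mathrm{dom}(\delta)$ with $\delta(1)$ given by~\eqref{delta_1}.

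For statement~3, the target identity $\delta(G)=\delta(1)G-\dif G$ is again verified against test functionals: I must show $\E[F\,(\delta(1)G-\dif G)] = \E[\dif F\cdot G]$ for $F\in\dom$, i.e. $\E[(\dif F)G + F(\dif G)] = \E[\delta(1)FG]$. The left-hand side is $\E[\dif(FG)]$ by the product rule, which is the $d=2$, $\varphi(x,y)=xy$ case of statement~1 — except that $\varphi(x,y)=xy$ does not have bounded derivatives, so one truncates $\varphi$ and passes to the limit using the integrability furnished by hypothesis~\eqref{2} together with $F,G,\dif F,\dif G\in L_2$. Then $\E[\dif(FG)] = \E[FG\,\delta(1)]$ by statement~2 applied to the functional $FG$, provided $FG\in\dom$; this membership and the requisite $L_2$-bounds are exactly where condition~\eqref{2}, $\E(\delta(1)G)^2<\infty$, is used. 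I expect the main obstacle to be precisely this last point: justifying that the unbounded bilinear map $\varphi(x,y)=xy$ may be inserted into the chain rule and that all the resulting terms — in particular $\delta(1)FG$ — are genuinely integrable, so that every limit passage and every application of the adjointness relation is legitimate. The truncation-and-dominated-convergence bookkeeping there is the crux; the rest is formal manipulation with the operators $\dif$ and $\delta$.
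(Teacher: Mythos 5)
Your proposal is correct and follows essentially the same route as the paper's proof: the chain rule is obtained from the difference quotient via convergence in probability upgraded to $L_2$ by the bounded-gradient domination, $\delta(1)$ is identified through the change-of-measure density $\kappa_c$ and the limit \eqref{kappa}, and statement 3 is reduced to the product rule for bounded (truncated) functionals followed by approximation. The only cosmetic difference is that the paper upgrades the convergence in statement 1 by uniform integrability of the squared quotients rather than by asserting they are $L_2$-Cauchy, but the substance is the same.
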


\begin{proof} 1. It is sufficient to consider $F_k, k=\overline{1,d}$
which satisfy (\ref{dif}). Then the fraction \be\label{fra}
{1\over c}(\mc{Q}_c\varphi(F_1,\dots,F_d)-\varphi(F_1,\dots,F_d))
\ee converges in probability to the right hand side of
(\ref{chain}).  Its square is dominated by
$$
\sup_{x}\|\nabla \varphi(x)\|^2\sum_{k=1}^d \left({\mc{Q}_c
F_k-F_k\over c}\right)^2,
$$
and hence is uniformly integrable. Consequently,  (\ref{fra})
converges in $L_2(\Omega, \F, \P)$.

2. For any $F$ satisfying (\ref{dif}) we have by (\ref{kappa})
$$
\E F \left(\int_0^T\int_{\Re}\chi(u)\tilde \nu(\df s,\df
u)\right)=\lim_{c\to 0}{1\over c}\E F(\kappa_c-1)=\lim_{c\to
0}{1\over c}\E(\mc{Q}_cF-F)=\E(\dif F),
$$
which gives by the definition of $\delta=\dif^*$ that
$\delta(1)=\int_0^T\int_{\Re}\chi(u)\tilde \nu(\df s,\df u)$.

3. For \emph{bounded} $F,G\in \dom$ one has by (\ref{chain})  that
$FG\in \dom$ and $\dif(FG)=F\dif G+G\dif F$. Then by statement 2
we have \be\label{ibp} \E G\dif F=\E F\Big(\delta(1)G-\dif G\Big).
\ee For arbitrary $F\in \dom$, using (\ref{chain}), one can choose
a sequence of bounded $F_n\in \dom$ such that $F_n\to F$ and $\dif
F_n\to \dif F$ in $L_2(\Omega, \F, \P)$. This proves (\ref{ibp})
for arbitrary $F\in \dom$, and yields the required statement under
the additional assumption that $G$ is bounded. Approximating $G$
by bounded $G_n\in \dom$ and using that $\delta$ is a closed
operator completes the proof.
\end{proof}

\subsection{Differential properties of the solution to \eqref{eq1}}\label{seq4}

Denote $Z_t^c=\mc{Q}_cZ_t$. It can be seen straightforwardly that
\be\label{DZ}{1\over c}(Z_t^c-Z_t)\to \dif
Z_t:=\int_0^t\int_{\Re}\varrho(u)\nu(\df s,\df u) \hbox{ in
} L_2(\Omega, \F, \P) \ee uniformly by $t\in [0,T]$ for
every $T$. Then one can consider $X_t^c=\mc{Q}_cX_t$ as the
solution to the following perturbed SDE:
\begin{equation}\label{eq_c}
    \df X_t^{c}=a_\theta(X_t^{c})\df t +\df Z_t^c.
\end{equation}
Applying Theorem II.2.8.5 \cite{scorohod2}, under conditions of
statement I, Theorem \ref{mainthm1}  we get that for any fixed
$\theta\in \Theta$ and initial value $x\in \Re$ the family $X^c_t$ is differentiable in $L_2$; that is,
\be\label{DX}{1\over c}(X_t^c-X_t)\to \dif X_t \hbox{ in }
L_2(\Omega, \F, \P), \quad c\to 0. \ee Clearly, the derivative in the right hand side of (\ref{DX})  is just the stochastic derivative of $X_t$; see Definition \ref{def1}. Moreover, convergence (\ref{DX}) holds true uniformly by $t\in [0,T]$
for every $T$. The process $Y_t:= \dif X_t$ satisfies the linear
SDE
$$
 \df Y_t=\partial_x a_\theta(X_t)Y_t\df t+\df \dif Z_t,\quad Y_0=0,
$$
and hence can be written explicitly: \begin{equation}\label{DX_exp}
 \dif X_t=\Ef_t
 \int_0^t
 \int_{\R}\Ef_s^{-1}\varrho(u) \nu(\df s, \df
u), \quad \Ef_t:=\exp\left\{\int_0^t\partial_xa_\theta(
X_\tau)\df \tau\right\}.
\end{equation}

The same argument gives  (we omit the detailed exposition):
\be\label{D_delta}
\dif
\delta(1)=\int_0^T\int_{\Re}\chi'(u)\varrho(u) \nu(\df s,\df u),
\ee
\be\label{Dr}
 \dif \Ef_t=\Ef_t
 \int_0^t\partial^2_{xx}a_\theta(X_\tau)\dif
 X_\tau\df \tau, \ee
\begin{equation}\label{DDX}\begin{aligned}
 \dif^2 X_t&:=\dif(\dif X_t)=\dif\Ef_t
\left(\int_0^t
 \int_{\R}\Ef_s^{-1}\varrho(u) \nu(\df s, \df
u)\right)
\\&+\Ef_t\int_0^t\int_{\Re}\left(\Ef_s^{-1}\varrho(u)\varrho'(u)-\Ef_s^{-2}\dif
\Ef_s\varrho(u)\right)\nu(\df s, \df u).
\end{aligned}
\end{equation}
The second derivative $\dif^2 X_t$ is stochastically
differentiable, as well;  a cumbersome explicit formula for
$\dif^3 X_t$,  analogous to (\ref{DDX}), is omitted. Note that the
assumption on $\sigma''$ from \textbf{H} (iii) is required to
bound $\chi'$ and prove (\ref{D_delta}), and the assumption on the
derivatives $\prt_xa, \prt_{xx}^2a, \prt^3_{xxx}a$ is used to get
the existence of the derivatives $\dif X_t, \dif^2 X_t, \dif^3
X_t$.

Recall that, although this is not given explicitly in the
notation, the solution  $X_t$ to Eq. (\ref{eq1}) is a function
which depends on the parameter $\theta$. Applying Theorem II.2.8.5
\cite{scorohod2} once more, we get that, under \textbf{H} (i) and
conditions of statement II of  Theorem \ref{mainthm1}, $X_t$  is
$L_2$-differentiable w.r.t. $\theta$ (in the sense similar to
(\ref{DX})),  and respective derivative equals
\begin{equation}\label{30}
\partial_\theta X_t=
 \Ef_t\int_0^t\Ef_s^{-1}[\partial_\theta a_\theta] (X_s)\df  s.
\ee This derivative is $L_2$-continuous w.r.t. $\theta$ and
stochastically  differentiable with
\begin{equation}\label{3}
\begin{aligned}
  \dif(\partial_\theta X_t)&=\dif\Ef_t \int_0^t
  \Ef_s^{-1}[\partial_\theta a_\theta] (X_s)\df  s\\
  &+  \Ef_t\int_0^t\left(\Ef^{-1}_s [\partial^2_{x\theta} a_\theta](X_s) (\dif X_s)
  -\Ef^{-2}_s\dif\Ef_s [\partial_\theta a_\theta] (X_s)\right)\df s.
\end{aligned}
\end{equation}

\subsection{Moment bounds}\label{s33} Here we collect several moment
bounds required in the consequent proofs.

First, we show that for every $p\geq 1$ there exists $C_{p}$ s.t.
\be\label{L_p}
  \E_x^\theta\Big|\dif X_t\Big|^p\leq C_p,  \quad x\in \Re, \theta\in \Theta, t\leq T.
\ee
 Note that  because $\prt_x a_\theta$ is bounded, there exist positive $C_1, C_2$
 such that
 \be\label{Ef_bound}
 C_1\leq \Ef_t\leq C_2, \quad t \in [0, T].
 \ee
Then (\ref{L_p}) would  follow from the same  bound for the It\^o
integral of the deterministic function $\varrho$ w.r.t. $\nu$. To
prove that bound, we apply  Lemma 5.1 \cite{Bichteler_Grav_Jacod}
with $L\equiv 1, \eta=\varrho$ and estimate separately the
integral over the compensator $\df t\ \mu(\df u)$. Because
$\prt^2_{xx}a$ is bounded, the analogue of (\ref{L_p}) holds true
for $\dif \Ef_t$ instead of $\dif X_t$; see (\ref{Dr}).  Repeating
(with minor modifications)  the same argument, we prove that the
analogue of (\ref{L_p}) holds true for
$$
\dif^2 X_t,\quad  \dif^2 \Ef_t, \quad  \dif^3 X_t.
$$
For instance, to get the analogue of (\ref{L_p}) for $\dif^2 X_t$
given by (\ref{DDX}), we use (\ref{Ef_bound}), the analogue of
(\ref{L_p}) for $\dif \Ef_t$, the H\"older inequality, and  Lemma
5.1 \cite{Bichteler_Grav_Jacod} with $L_s=\dif \Ef_s,
\eta=\varrho$.

Finally, by assumption \textbf{H} (iii) and the choice of
$\varrho$,  the function $$ \chi=-(\sigma'/\sigma)\varrho-\varrho'
$$
is bounded by $C|u|$  and vanishes for  $|u|\geq u_0$. Hence by
the  same argument the analogue of (\ref{L_p}) holds true for
$\delta(1)$, given by  (\ref{delta_1}).

Next,  note that by the condition \textbf{H} (i) we have $\E
|Z_t|^{2+\kappa}\leq C, $ $t\leq T$. Recall that $a_\theta(x)$ has
a linear growth bound w.r.t. $x$. Then the standard argument based
on the Gronwall lemma shows that \be\label{mom_kappa_Z}
\E_x^\theta|X_t|^{p}\leq C(1+|x|^p), \quad p\in [1, 2+\kappa). \ee
Similar bounds with $p<2+\kappa$ hold true for $\prt_\theta X_t$
and $\dif(\prt_\theta X_t)$. This follows from  formulae
(\ref{30}), (\ref{3}), the H\"older inequality, and $L_p$-bounds
(with arbitrarily large $p$) for $\dif X_t, \dif \Ef_t$.
Similarly,  $\dif X_t, \dif^2 X_t, \dif^3 X_t$ are
$L_2$-differentiable w.r.t. parameter $\theta$ (we omit the
details, but note that this is the place where we require the
assumption on $\prt^2_{x\theta}a$, $\prt^3_{xx\theta}a$,
$\prt^4_{xxx\theta}a$). Respective derivatives $\prt_\theta \dif
X_t, \prt_\theta\dif^2 X_t,$ $\prt_\theta\dif^3 X_t$ can be
written explicitly, and satisfy bounds analogous to
(\ref{mom_kappa_Z}).

Finally, we show that for every $p\geq 1, t\in (0, T]$  there
exists $C_{p,t}$ s.t. \be\label{moments} \E_x^\theta(\dif
X_t)^{-p}\leq C_{p,t}, \quad x\in \Re, \theta\in \Theta \ee
(recall that $\varrho\geq 0$ and therefore $\dif X_t$ is
non-negative). By  (\ref{Ef_bound}) and non-negativity of
$\varrho$,
$$
\dif X_t\geq (C_2/C_1) \int_0^t\int_{\Re}\varrho(u)
 {\nu}(\df s, \df u) \geq (C_2/C_1)\int_0^t\int_{|u|\leq u_0}u^2  {\nu}(\df s, \df u).
$$
Hence by assumption \textbf{H} (iv)
$$\begin{aligned}
\P_x^\theta(\dif X_t<\eps)&\leq
\P_x^\theta\left(\int_0^t\int_{(C_1\eps /C_2)^{1/2}\leq|u|\leq
u_0}u^2  {\nu}(\df s, \df
u)=0\right)\\&=\exp\left[-t\mu\Big(u:\{(C_1\eps
/C_2)^{1/2}\leq|u|\leq u_0\}\Big)\right]=o(\eps^p), \quad \eps \to
0
\end{aligned}
$$
for every $p\geq 1$, which implies the required statement.

\section{Proof of Theorem \ref{mainthm1}}

Our first  step is to prove that $(\dif X_t)^{-1}\in
\mathrm{dom}(\delta)$ and \be\label{Xi} \Xi_t:=\delta\left({1\over
\dif X_t}\right)={\delta(1)\over \dif X_t}+{\dif^2 X_t\over (\dif
X_t)^2}. \ee To do that, for arbitrary $\eps>0$ we consider the
variable $G_{t,\eps}=(\dif X_t+\eps)^{-1}$.   By statement 1 of
Proposition \ref{lem01},  applied to $F=\dif X_t$ and $\phi\in
C^1_b(\Re)$ such that $\phi(x)=1/x, x\geq\eps$, the variable
$G_{t,\eps}$ has the stochastic derivative  $-{\dif^2 X_t/(\dif
X_t+\eps)^2}$. Then by the statement 3 of Proposition \ref{lem01}
\be\label{Xi_eps} \Xi_{t,
\eps}=\delta\left(G_{t,\eps}\right)={\delta(1)\over \dif
X_t+\eps}+{\dif^2 X_t\over (\dif X_t+\eps)^2}. \ee By (\ref{L_p})
and (\ref{moments}), $G_{t,\eps}\to (\dif X_t)^{-1}$ in $L_2$.
Using, in addition,  analogues of (\ref{L_p}) for $\delta(1)$ and
$\dif^2 X_t$, we see that the right hand side term in
(\ref{Xi_eps}) converges to that in (\ref{Xi}) in $L_2$ as
$\eps\to 0$. Hence (\ref{Xi}) holds true because  $\delta$ is a
closed operator.

Now we can finalize the proof of \emph{statement I}; the argument
here  is quite analogous to the one from the proof of Proposition
3.1.2 in  \cite{nualart}, hence we omit details. By Proposition
\ref{lem01}, the definition of $\delta=\dif^*$, and  (\ref{Xi}),
for every $\varphi\in C^1_b(\Re)$ we have
$$
\E_x^\theta\varphi'(X_t)=\E_x^\theta\dif(\varphi(X_t))\left({1\over\dif
X_t}\right)=\E_x^\theta \varphi(X_t)\delta \left({1\over \dif
X_t}\right)= \E_x^\theta \varphi(X_t)\Xi_t.
$$
Approximating $\varphi_y:=\1_{[0,\infty)}(\cdot-y)$ by a sequence of
$\varphi_n\in C^1_b(\Re)$, we get the representation (\ref{irp}).

By Theorem II.2.8.3 \cite{scorohod2} applied to Eq. (\ref{eq1}),
$X_t$  depend continuously (in $L_2$) w.r.t. parameters $x, t,
\theta$. The same argument gives the
continuity (in $L_2$) of $\dif X_t$ and $\dif^2 X_t$
 w.r.t. parameters $x, t, \theta$; note that both these derivatives
 can be defined as  solutions to certain SDE's, hence one can apply
 Theorem II.2.8.3 \cite{scorohod2} iteratively. Then it is easy to
 show that $\Xi_t$ depend continuously (in $L_2$) on the parameters
 $x, t, \theta$, as well. Indeed, by the continuity of $\dif X_t, \dif^2 X_t$
 and non-negativity of $\dif X_t$, for every $\eps>0$ the functional $\Xi_{t,\eps}$,
 defined by (\ref{Xi_eps}), is continuous. Using the moment
 bounds (\ref{L_p}) and  (\ref{moments}) it is easy to show that $\Xi_{t,\eps}$
 converges to $\Xi_t$ in $L_2$ as $\eps\to \infty$ uniformly in some neighborhood
 of any given point $(x, t, \theta)\in \Re\times (0, \infty)\times \Theta$,
 hence the limiting functional $\Xi_t$ depend continuously on $x, t, \theta$.

By representation (\ref{irp}), we have \be\label{null}
\P_x^\theta(X_t=y)=0, \quad x, y\in \Re, \quad t>0, \quad
\theta\in \Theta. \ee Then $L_2$-continuity of $\Xi_t$ and
representation (\ref{irp}) provide that $p_t^\theta(x,y)$ is
continuous w.r.t. $(t,x,y, \theta)$.

To prove \emph{statement II}, we make one more integration by
parts in the right hand side of (\ref{irp}). Similarly to the
proof of (\ref{Xi}),  we use Proposition \ref{lem01} and
(\ref{L_p}), (\ref{moments}) to show that $\Xi_t/(\dif X_t)$
belongs to $\mathrm{dom}(\delta)$ with \be\label{delta_Xi}
\delta\left({\Xi_t\over \dif X_t}\right)={(\delta(1))^2-\dif
\delta(1)\over (\dif X_t)^2}+{3\delta(1)\dif^2X_t-\dif^3 X_t\over
(\dif X_t)^3}+{3(\dif^2 X_t)^2\over (\dif X_t)^4}. \ee

By  (\ref{irp}), we have \be\label{irp_1}
p_t^\theta(x,y)=\E_x^\theta \psi_y(X_t) \delta\left({\Xi_t\over
\dif X_t}\right), \ee where $\psi_y=(\cdot-y)\vee 0$ is an
absolutely continuous function with  the derivative equal to
$\varphi_y$. Recall that  $X_t$ is $L_2$-differentiable w.r.t.
parameter $\theta$, see (\ref{30}) for its derivative. In
addition, $\dif X_t, \dif^2 X_t,$ and  $\dif^3 X_t$, are
$L_2$-differentiable w.r.t. $\theta$, and all these derivatives
satisfy moment bounds similar to (\ref{mom_kappa_Z}). Now it is
easy to prove that $\delta\left(\Xi_t/(\dif X_t)\right)$ is
$L_2$-differentiable w.r.t. $\theta$ (the explicit formula of the
derivative is omitted). One can just replace $\dif X_t$ in the
denominator in the formula  (\ref{delta_Xi}) by $\dif X_t+\eps$,
prove that this new functional is $L_2$-differentiable w.r.t.
$\theta$ using the chain rule, and then show using (\ref{moments})
that both this functional and its derivative w.r.t. $\theta$
converge (locally uniformly) in $L_2$ as $\eps\to 0$,
respectively,  to $\delta\left(\Xi_t/(\dif X_t)\right)$ and to the
functional $\prt_\theta\delta\left(\Xi_t/(\dif X_t)\right)$ which
comes from the formal differentiation of (\ref{delta_Xi}). This
argument also shows that $\delta\left(\Xi_t/(\dif X_t)\right)$ and
$\prt_\theta\delta\left(\Xi_t/(\dif X_t)\right)$ depend
continuously (in $L_2$) on $x, t, \theta$. Therefore, we can take
a derivative at the right hand side in (\ref{irp_1}), which gives
$$
\prt_\theta
p_t^\theta(x,y)=\E_x^\theta\left[\varphi_y(X_t)\prt_\theta X_t
\delta\left({\Xi_t\over \dif X_t}\right)+\psi_y(X_t)
\prt_\theta\delta\left({\Xi_t\over \dif X_t}\right)\right].
$$
This function is continuous w.r.t. $(t,x, y,\theta)$ because
$X_t, \prt_\theta X_t, \delta\left(\Xi_t/(\dif X_t)\right)$, and
$\prt_\theta\delta\left(\Xi_t/(\dif X_t)\right)$ depend
continuously (in $L_2$) on $x, t, \theta$, and (\ref{null}) holds
true.

To prove \emph{statement III}, we use moment bounds for
$\prt_\theta X_t$, $\dif(\prt_\theta X_t)$, $\dif X_t$, $\dif
X_t^2$ to get,   similarly to the proof of (\ref{Xi}), that
$\prt_\theta X_t/(\dif X_t)$ belongs to $\mathrm{dom}(\delta)$ and
\be\label{Xi_1} \Xi_t^1:=\delta\left({\prt_\theta X_t\over \dif
X_t}\right)={(\prt_\theta X_t) \delta(1)\over \dif
X_t}+{(\prt_\theta X_t) \dif^2 X_t \over (\dif
X_t)^2}-{\dif(\prt_\theta X_t)\over \dif X_t}. \ee Then for any
test function  $f\in C^1(\Re)$ with a bounded derivative we have
\begin{multline}\label{rightside}
    \partial_\theta\E_x^\theta f(X_t)=
    \E_x^\theta f'(X_t)(\partial_\theta X_t)=
    \E_x^\theta \dif f(X_t)\left({\prt_\theta X_t\over \dif X_t}\right)\\
    =\E_x^\theta f(X_t)\Xi^1_t=\E_x^\theta f(X_t)g_t^\theta(x,X_t);
\end{multline}
see (\ref{g}) for the definition of $g_t^\theta(x,y)$. Because the
test  function  $f$ is arbitrary, the integral identity
(\ref{rightside}) proves (\ref{d_theta_rep}).

\section{Proof of Theorem \ref{mainthm2}}

First, we formulate some properties of   $p_t^\theta$  and
$g_t^\theta=\prt_\theta \log p_t^\theta$, which follows from the
integral representations for these functions and moment bounds
obtained above.

\begin{lem}\label{lem_aux}  For every $p<2+\kappa$ there
exists constant $C$ which depends on $t$ and $p$ only, such that
\be\label{p_tails}
p_t^\theta(x,y)\leq C(1+|x-y|)^{-p},
\ee
\be\label{g_mom}
\E_x^\theta \Big|g_t^\theta(x, X_t)\Big|^p\leq C(1+|x|)^p.
\ee
\end{lem}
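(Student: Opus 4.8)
The plan is to read both inequalities off the integral representations established in the proof of Theorem~\ref{mainthm1}, combined with the moment bounds of Section~\ref{s33} and Hölder's inequality; the only genuinely new ingredient needed is an a priori tail bound for $X_t-x$.

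For (\ref{g_mom}) I would first note that the chain of equalities (\ref{rightside}) says $\E_x^\theta[f(X_t)\Xi_t^1]=\E_x^\theta[f(X_t)g_t^\theta(x,X_t)]$ for every $f\in C^1$ with bounded derivative, hence, by density and the absolute continuity of the law of $X_t$, for every bounded Borel $f$; therefore $g_t^\theta(x,X_t)=\E_x^\theta[\Xi_t^1\mid X_t]$ $\P_x^\theta$-a.s. Since $\Xi_t^1\in L_1(\P_x^\theta)$ by Section~\ref{s33}, conditional Jensen gives $\E_x^\theta|g_t^\theta(x,X_t)|^p\le\E_x^\theta|\Xi_t^1|^p$, so it suffices to prove $\E_x^\theta|\Xi_t^1|^p\le C(1+|x|)^p$. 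Expanding $\Xi_t^1$ via (\ref{Xi_1}) and applying Hölder, this reduces to the bounds already recorded: $\prt_\theta X_t$ and $\dif(\prt_\theta X_t)$ satisfy $L_q$-bounds of order $C(1+|x|)$ for every $q<2+\kappa$, while $\delta(1)$, $\dif^2X_t$, $(\dif X_t)^{-1}$ satisfy $L_q$-bounds uniform in $(x,\theta)$ for all $q\ge1$; keeping the ``heavy'' factor at an exponent $q\in[p,2+\kappa)$ and the rest at large exponents yields the claim.

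For (\ref{p_tails}) I would start from (\ref{irp}), giving $p_t^\theta(x,y)\le\E_x^\theta[|\Xi_t|\,\1_{X_t>y}]$; repeating the integration by parts from the proof of statement~I with a test function approximating $-\1_{(-\infty,y)}$ instead of $\1_{[y,\infty)}$ yields the companion bound $p_t^\theta(x,y)\le\E_x^\theta[|\Xi_t|\,\1_{X_t<y}]$, and one uses whichever of the two puts the indicator on an event of the form $\{|X_t-x|>|x-y|\}$. Hölder with a large exponent $q$ then gives $p_t^\theta(x,y)\le\|\Xi_t\|_{L_q(\P_x^\theta)}\,\P_x^\theta(|X_t-x|>|x-y|)^{1/q'}$; the prefactor is bounded uniformly in $(x,\theta)$ by (\ref{Xi}), Hölder, and the uniform moment bounds for $\delta(1),\dif^2X_t,(\dif X_t)^{-1}$. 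For the tail factor, boundedness of $\prt_x a_\theta$ yields, via Gronwall, the pathwise estimate $|X_t-x|\le(t\,|a_\theta(x)|+\sup_{s\le t}|Z_s|)e^{Lt}$ with $L=\sup|\prt_x a_\theta|$, so Doob's inequality and assumption~\textbf{H}(i) control $\P_x^\theta(|X_t-x|>r)$ by the $m$-th moment of $Z$ for any $m<2+\kappa$; taking $q$ close to $2+\kappa$ and $r=|x-y|$ produces the power $(1+|x-y|)^{-p}$, the threshold $2+\kappa$ matching the integrability of $\mu$, and boundedness of $p_t^\theta$ on $\{|x-y|\le R\}$ follows from $p_t^\theta(x,y)\le\E_x^\theta|\Xi_t|\le C$.

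The step I expect to be the main obstacle is making this tail control uniform in the relevant parameters: the Gronwall estimate only gives $\E_x^\theta|X_t-x|^m\le C(1+|x|)^m$, hence $\P_x^\theta(|X_t-x|>r)\le C(1+|x|)^m r^{-m}$, and to obtain a constant depending on $t$ and $p$ alone one must either sharpen the control of $X_t-x$ so that it does not deteriorate as $|x|\to\infty$, or bring in the position of the bulk of the law of $X_t$ and exploit the two one-sided representations accordingly. Everything else is a routine assembly of Hölder's inequality with the moment bounds of Section~\ref{s33} and the representations (\ref{irp})–(\ref{Xi_1}).
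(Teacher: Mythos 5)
Your proposal follows the paper's own argument essentially step for step: (\ref{g_mom}) comes from the identification $g_t^\theta(x,X_t)=\E_x^\theta\big[\Xi_t^1\,\big|\,X_t\big]$, conditional Jensen, and the moment bound $\E_x^\theta|\Xi_t^1|^p\leq C(1+|x|^p)$ extracted from (\ref{Xi_1}) and Section~\ref{s33}; (\ref{p_tails}) comes from the two one-sided representations $p_t^\theta(x,y)=\E_x^\theta\left[\Xi_t\1_{X_t>y}\right]=-\E_x^\theta\left[\Xi_t\1_{X_t\leq y}\right]$ (the paper gets the second one from $\E\Xi_t=0$ rather than by a second approximation argument, an immaterial difference), H\"older with a large exponent $p'$, and a Chebyshev bound on the tail of $X_t-x$. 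The obstacle you flag at the end is genuine and is precisely the point where the paper is too quick: it asserts $\E_x^\theta|X_t-x|^{2+\kappa}\leq C$ ``by the Gronwall lemma'', whereas with $a_\theta$ only of linear growth Gronwall gives $\big(t|a_\theta(x)|+\sup_{s\leq t}|Z_s|\big)e^{Lt}$ and hence a constant of order $(1+|x|)^{2+\kappa}$; for $a_\theta(x)=\theta x$ the density near $y\approx xe^{\theta t}$ does not decay in $|x-y|$, so the uniform-in-$x$ form of (\ref{p_tails}) really requires $a_\theta$ bounded (or a restatement with $y-y_t^\theta(x)$ in place of $y-x$). Your write-up is therefore exactly as complete as the paper's proof, and your diagnosis of where the argument is thin is accurate.
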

\begin{proof}
By the moment bounds from Section \ref{s33} and formulae
(\ref{Xi}), (\ref{Xi_1}), we have \be\label{Xi_mom} \E_x^\theta
\Big|\Xi_t\Big|^{p'}\leq C \ee for every $p'\geq 1$, and
\be\label{Xi_mom_1} \E_x^\theta|\Xi_t^1|^{p}\leq C(1+|x|^p) \ee
for every $p\in [1, 2+\kappa)$, with the constants $C$ depending
on $t, p'$ (resp. $p$), only.

Because
$$
g_t^\theta(x, X_t)=\E_x^\theta\Big[\Xi_t^1\Big|X_t\Big],
$$
inequality (\ref{g_mom}) follows directly from (\ref{Xi_mom_1})
and Jensen's  inequality. To get (\ref{p_tails}), we use the
standard argument, e.g.  \cite{nualart}, Lemma 3.1.3 and Example
afterwards. By the representation (\ref{irp}) and the H\"older
inequality,
$$
p_t^\theta(x,y)\leq C\Big(\P_x^\theta(X_t>y)\Big)^{(p'-1)/p'}.
$$
Recall that $Z_t$ has finite moment of the order $2+\kappa$ and
$a_\theta$ has  bounded derivative in $x$. Then by the Gronwall
lemma
$$
\E_x^\theta|X_t-x|^{2+\kappa}\leq C.
$$
Then for $y>x$
$$
\P_x^\theta(X_t>y)\leq \min\{1, C|x-y|^{-(2+\kappa)}\},
$$
which gives (\ref{p_tails}) with $p=(p'-1)(2+\kappa)/p'$; the
latter  value can be made arbitrarily close to $2+\kappa$ by the
choice of $p'$.  For $y<x$ one should use instead of (\ref{irp})
the representation
$$
p_t^\theta(x,y)=-\E_x^\theta\left[\Xi_t \1_{X_t\leq y}\right],
$$
which is equivalent to (\ref{irp}), because $\Xi_t$ is a
stochastic  integral and therefore has zero expectation.

\end{proof}

Let us proceed  with the proof of the Theorem. Formula
(\ref{rep_p})  and statement I of Theorem \ref{mainthm1}
immediately provide the  continuity property (a) from the
definition of a regular statistical experiment.  To prove the
$L_2$-differentiability property (b), and $L_2$-continuity of the
derivative (c),  we put for $\ve>0$
$$
\psi_\ve(z)=\begin{cases}0,&z<\ve/2,\\ {(z-\ve/2)^2\over
2\ve^{3/2}},&z\in [\ve/2,\ve],\\ \sqrt{z}-{7\sqrt{\ve}\over
8},&z\geq \ve.\end{cases}
$$
By the construction, $\psi_\ve\in C^1$ and $\psi_\ve(z)=0$ for
$z\leq \eps/2$.  Then, by statement II of Theorem \ref{mainthm1}
and statement 2 of Lemma \ref{lem_aux} the mapping $\theta\mapsto
\eta_\ve^\theta:=\psi_\ve(p^\theta)\in L_2(\Re^n, \lambda)$ is
continuously differentiable with the derivative equal
$$
\zeta_\ve^\theta:= {\df\over
\df\theta}\eta_\ve^\theta=\psi_\ve'(p^\theta)\prt_\theta p^\theta;
$$
see (\ref{rep_dp}) for the formula for $\prt_\theta p^\theta$. By the construction,
$$
\psi_\ve(z)\to \psi_0(z):=\sqrt{z}, \quad \psi_\ve'(z)\to
\psi_0'(z)= {1\over 2\sqrt{z}}, \quad \ve\to 0.
$$
Hence to prove properties (b), (c) it is enough to show that
\be\label{conv} \eta_\ve^\theta\to
\eta_0^\theta:=\psi_0(p^\theta),\quad \zeta_\ve^\theta\to
\zeta_0^\theta:=\psi_0'(p^\theta)\prt_\theta p^\theta\quad
\hbox{in}\quad L_2(\Re^n, \lambda) \ee uniformly by $\theta$. We
show the second convergence in (\ref{conv}), the proof of the
first one is similar and simpler. By the explicit form of
$\psi'_\ve$ and the H\"older inequality,
$$\begin{aligned}
\int_{\Re^n}(\zeta_\ve^\theta- \zeta_0^\theta)^2\df\lambda&\leq
{1\over 4}\int_{p^\theta\leq \ve} (g^\theta)^2p^\theta
\df\lambda\leq {1\over 4}\left(\int_{\Re^n} |g^\theta|^pp^\theta
\df\lambda\right)^{\frac{2}{p}}\left(\int_{p^\theta\leq \ve}
p^\theta \df\lambda\right)^{\frac{p-2}{p}}
\end{aligned}
$$
for $p\geq 2$. Take $p\in (2, 2+\kappa)$, then by Jensen's
inequality,  representation (\ref{rep_dp}),  and
(\ref{mom_kappa_Z}),
$$\begin{aligned}
\int_{\Re^n} &|g^\theta|^pp^\theta \df\lambda=\E_{x}^\theta
\left|\sum_{k=1}^ng_{t_k-t_{k-1}}^\theta(X_{t_{k-1}},
X_{t_{k}})\right|^p\\&\leq n^{p-1}\sum_{k=1}^n\E_{x}^\theta
\left|g_{t_k-t_{k-1}}^\theta(X_{t_{k-1}}, X_{t_{k}})\right|^p\leq
n^{p-1} C\sum_{k=1}^n\E_{x}^\theta\Big|1 + |X_{t_{k-1}}|\Big|^p\leq
\tilde C,
\end{aligned}
$$
where  constant $\tilde C$ does not depend on $\theta$. On the
other hand,
$$
\int_{p^\theta\leq \ve} p^\theta \df\lambda\leq
\sqrt{\eps}\int_{\Re^n}\sqrt{p^\theta}\df\lambda,
$$
and by (\ref{p_tails}) with  $p\in (2, 2+\kappa)$
$$\int_{\Re^n}\sqrt{p^\theta}\df\lambda\leq
C\int_{\Re^n}\prod_{k=1}^n\Big(1+|x_{k-1}-x_k|\Big)^{-p/2}dx_1\dots dx_n\leq \hat C$$
with  constant $\hat C$ which does not depend on $\theta$.
Summarizing all the above, we get the second convergence in
(\ref{conv}), uniform by $\theta$. This completes the proof of the
regularity. The formula for the Fisher information follows from
the identity
$$
\int_{\Re^n}
\left(\prt_\theta\sqrt{p^\theta}\right)^2\df\lambda={1\over
4}\int_{\Re^n} (g^\theta)^2p^\theta \df\lambda={1\over
4}\E_{x}^\theta
\left(\sum_{k=1}^ng_{t_k-t_{k-1}}^\theta(X_{t_{k-1}},
X_{t_{k}})\right)^2,
$$
Markov property, and  the observation
that
$$
\E_x^\theta g_t^\theta(x, X_t)=0
$$
for every $x\in \Re, \theta\in \Theta, t>0$, which follows  from
(\ref{rightside}) with $f\equiv 1$.

\section*{Acknowledgements} The authors express their
deep gratitude to  the referees for  numerous  comments  and
suggestions that  were helpful and led for substantial improvement
of  the paper.

\end{document}